\title[Flux of strictly contact isotopies]{A note on the volume flux of smooth and continuous strictly contact isotopies}
\author{Stefan M\"uller}
\email{mueller@kias.re.kr}
\address{Korea Institute for Advanced Study, Seoul 130--722, Republic of Korea}
\subjclass[2010]{53D35}
\keywords{Flux homomorphism, strictly contact isotopy, regular contact form, topological or continuous Hamiltonian, symplectic, or strictly contact isotopy}
\newtheorem{thm}{Theorem}
\newtheorem{cor}[thm]{Corollary}
\newtheorem{lem}[thm]{Lemma}
\newtheorem{pro}[thm]{Proposition}
\theoremstyle{definition}
\newtheorem{exa}[thm]{Example}
\def\Flux{{\rm Flux}}
\def\Hom{{\rm Hom}}
\def\L{{\mathcal L}}
\def\R{{\mathbb R}}
\def\vol{{\rm vol}}
\def\Vol{{\rm Vol}}
\def\Z{{\mathbb Z}}
\begin{document}
\thispagestyle{plain}

\begin{abstract}
This note on the flux homomorphism for strictly contact isotopies complements the recent paper \cite{mueller:hvf11} by P.~Spaeth and the author.
We determine the volume flux restricted to symplectic and volume-preserving contact isotopies and their $C^0$-limits for some classes of symplectic and contact manifolds and for a number of examples.
In particular, we see that the restriction of the flux may fail to be surjective.
It vanishes for an isotopy preserving a regular contact form, but can be nontrivial for non-regular contact forms.
Applications are discussed in the article cited above.
We also find obstructions to regularizing a strictly contact isotopy that are not present for Hamiltonian isotopies \cite[Section 5.2]{polterovich:ggs01} or contact isotopies \cite{mueller:gcd11}.
\end{abstract}

\maketitle

\section{Introduction} \label{sec:intro}
Let $M$ denote a smooth manifold equipped with a volume form $\mu$.
For simplicity, assume $M$ is closed and connected.
We will discuss the flux homomorphism (see \cite{banyaga:scd97} and the references therein), which is defined for any volume-preserving isotopy, in the cases the manifold admits a symplectic form (if $M$ is even-dimensional) or a contact form (if $M$ is odd-dimensional), and the volume form in question is the canonical one (up to scaling) induced by the symplectic or contact form.
By passing to an appropriate quotient of the codimension $1$ cohomology group of the underlying manifold, the flux is also defined for the time-one maps, i.e.\ volume-preserving diffeomorphisms isotopic to the identity.
For all our purposes it is sufficient to study the flux homomorphism on the Lie algebras of symplectic and strictly contact vector fields, which is given by
	\[ \Flux (\{ X_t \}_{0 \le t \le 1}) = \left[ \int_0^1 \iota (X_t) \mu \, dt \right] \in H^{\dim M - 1} (M,\R). \]
The flux of a volume-preserving isotopy $\{ \varphi_t \}_{0 \le t \le 1}$ is by definition the flux of its infinitesimal generator $\{ X_t \}_{0 \le t \le 1}$, that is, the smooth family of vector fields uniquely determined by $\frac{d}{dt} \varphi_t = X_t \circ \varphi_t$.

\section{Symplectic manifolds}
In this section, $\omega$ denotes a symplectic form on $M^{2n}$, and the volume form is the induced Liouville volume form $\omega^n$.
There is a symplectic version of the flux homomorphism\footnote{Throughout this note, in the absence of the prefix symplectic, the name flux always refers to the map defined in Section~\ref{sec:intro}.} defined by
	\[ \Flux (\{ X_t \}_{0 \le t \le 1}) = \left[ \int_0^1 \iota (X_t) \omega \, dt \right] \in H^1 (M,\R) \]
for $\{ X_t \}_{0 \le t \le 1}$ a smooth family of symplectic vector fields.
All the results recalled in this section are well-known.

\begin{pro}[\cite{banyaga:sgd78}]
The flux homomorphisms for symplectic and for volume-preserving isotopies are both surjective, and are related (up to multiplication with the constant factor $n$) by the map 
\begin{equation} \label{eqn:omega}
	\wedge [\omega^{n - 1}] \colon H^1 (M,\R) \to H^{2n - 1} (M,\R), \ [\beta] \mapsto [\beta \wedge \omega^{n - 1}].
\end{equation}
In particular, the flux of a Hamiltonian isotopy vanishes, and the image of the flux map restricted to symplectic isotopies coincides with the image of the map $\wedge [\omega^{n - 1}]$.
\end{pro}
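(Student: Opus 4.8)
The plan is to reduce the proposition to two elementary facts: a pointwise algebraic identity relating $\iota(X)\omega^n$ to $(\iota(X)\omega)\wedge\omega^{n-1}$, and the nondegeneracy of $\omega$ and of the induced Liouville volume form $\mu=\omega^n$, which lets one solve for a vector field with prescribed contraction. First I would note that both flux expressions genuinely represent cohomology classes: for a symplectic vector field $X$, Cartan's formula $\L_X\omega = d\,\iota(X)\omega + \iota(X)\,d\omega$ together with $\L_X\omega=0$ and $d\omega=0$ gives $d\,\iota(X)\omega=0$, and likewise $d\,\iota(X)\mu=0$ whenever $\L_X\mu=0$, the term $\iota(X)\,d\mu$ vanishing for degree reasons. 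Next, since $\iota(X)$ is an antiderivation of degree $-1$ and $\omega$ has even degree, the Leibniz rule yields the pointwise identity $\iota(X)\omega^n = n\,(\iota(X)\omega)\wedge\omega^{n-1}$. Integrating over $t\in[0,1]$ and passing to de Rham cohomology then gives
\[ \Flux(\{X_t\}_{0\le t\le 1}) = n\left(\left[\int_0^1\iota(X_t)\omega\,dt\right]\wedge[\omega^{n-1}]\right), \]
which is exactly the claim that the volume flux of a symplectic isotopy is $n$ times the image of its symplectic flux under the map \eqref{eqn:omega}.

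For surjectivity, given a class in $H^1(M,\R)$ I would choose a closed representative $\beta$; nondegeneracy of $\omega$ produces a unique vector field $X$ with $\iota(X)\omega=\beta$, and since $\beta$ is closed, Cartan's formula shows $\L_X\omega=0$. As $M$ is closed, the flow $\{\varphi_t\}_{0\le t\le 1}$ of the time-independent field $X$ is a well-defined symplectic isotopy with infinitesimal generator the constant family $X_t\equiv X$, so its symplectic flux is $[\beta]$; hence the symplectic flux is surjective. Running the same argument with $\mu$ in place of $\omega$ --- using that contraction with $\mu$ is an isomorphism from vector fields onto $(2n-1)$-forms and that $d\mu=0$ automatically --- shows that for any closed $(2n-1)$-form $\gamma$ there is a $\mu$-preserving isotopy with volume flux $[\gamma]$, so the volume flux is surjective onto $H^{2n-1}(M,\R)$. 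The vanishing on a Hamiltonian isotopy is then immediate: if $\iota(X_t)\omega=dH_t$, then $\int_0^1\iota(X_t)\omega\,dt = d\bigl(\int_0^1 H_t\,dt\bigr)$ is exact, so the symplectic flux --- and hence, by the displayed relation, the volume flux --- vanishes. Finally, since the symplectic flux is onto $H^1(M,\R)$ and multiplication by the nonzero constant $n$ does not change the image of a linear map, the displayed relation identifies the image of the volume flux restricted to symplectic isotopies with the image of the map $\wedge[\omega^{n-1}]$.

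There is no serious obstacle here, the result being classical and due to Banyaga; the step demanding the most care is the algebraic identity together with the verification that integrating in $t$ and then passing to cohomology classes genuinely produces the wedge product with $[\omega^{n-1}]$ and the correct constant $n$. One should also bear in mind that the flux homomorphism on the fundamental group of the relevant diffeomorphism group is in general only well defined after quotienting by the flux subgroup; this subtlety does not intervene here because, as in the excerpt, we work throughout at the level of the Lie algebras of symplectic, respectively $\mu$-preserving, vector fields, where the flux is simply a cohomology-valued integral.
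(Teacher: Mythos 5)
Your proof is correct and complete. The paper itself gives no argument for this proposition --- it is stated as a well-known result cited from Banyaga, with the remark that everything in that section is classical --- so there is nothing to compare against; your write-up supplies the standard proof (the antiderivation identity $\iota(X)\omega^n = n\,(\iota(X)\omega)\wedge\omega^{n-1}$, plus solving $\iota(X)\omega=\beta$ and $\iota(X)\mu=\gamma$ for closed representatives to get surjectivity), and all the steps, including the closedness checks via Cartan's formula and the final identification of images using that $n\neq 0$, are sound.
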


We remark that the map $\wedge [\omega^{n - 1}]$ in equation~(\ref{eqn:omega}) coincides up to the (nonsingular) cup product pairing with the pairing 
\begin{equation} \label{eqn:pairing}
	H^1 (M,\R) \times H^1 (M,\R) \to \R, \ ([\alpha], [\beta]) \mapsto \int_M \alpha \wedge \beta \wedge \omega^{n - 1}.
\end{equation}

\begin{exa}
If $H^1 (M,\R) = 0$, the restriction of the flux to symplectic isotopies vanishes identically.
By duality, $H^1 (M,\R)$ and $H^{2n - 1} (M,\R)$ have the same rank over $\R$, so the flux is in fact trivial on the full group of volume-preserving isotopies.
This is the case for example for $M = S^2$, or more generally, any complex projective space.
\end{exa}

\begin{exa}
If the map in equation~(\ref{eqn:omega}) is an isomorphism, $(M,\omega)$ is said to be of Lefschetz type.
This class contains all K\"ahler manifolds, such as tori, complex projective spaces, and surfaces.
Tori in all dimensions and surfaces of genus greater than zero of course have nontrivial first cohomology groups.
\end{exa}

\begin{exa}
If $H^1 (M,\R)$ has rank $1$, the pairing in equation~(\ref{eqn:pairing}) vanishes for degree reasons, and thus the map in equation~(\ref{eqn:omega}) is trivial.
However, as remarked above, $H^{2n - 1} (M,\R)$ is nontrivial in this case, and since the flux is surjective, the image of the flux restricted to symplectic isotopies is properly smaller than the image of the flux on the full group of volume-preserving isotopies.
\end{exa}

\section{Contact manifolds}
In this section, $\alpha$ denotes a contact form on a contact manifold $(M^{2n + 1},\xi)$, and the induced volume form is $\alpha \wedge (d\alpha)^n$.
An isotopy $\{ \varphi_t \}_{0 \le t \le 1}$ is said to be contact if it preserves the contact structure $\xi$, or equivalently, its infinitesimal generator $\{ X_t \}_{0 \le t \le 1}$ is contact, that is, it satisfies $\L_{X_t} \alpha = h_t \alpha$ for all $0 \le t \le 1$, where $h_t$ is a smooth function on $M$.
To every contact isotopy corresponds a unique smooth function $H \colon [0,1] \times M \to \R$, defined by $\alpha (X_t) = H_t$, and conversely, every (time-dependent) smooth function $H$ defines a unique contact isotopy.
This function is often called the contact Hamiltonian of the isotopy $\{ \varphi_t \}$.
If $\{ X_t \}$ is in addition divergence-free, then $h_t = 0$, that is, $X_t$ preserves the contact form $\alpha$.
Equivalently, $\{ \varphi_t \}$ is also volume-preserving, and thus preserves the contact form $\alpha$, i.e.\ $\varphi_t^* \alpha = \alpha$ for all $0 \le t \le 1$.
In that case, both $X_t$ and $\varphi_t$ are called strictly contact.
This notion depends on the contact form $\alpha$ and not just on the contact structure $\xi = \ker \alpha$.
The unique strictly contact vector field $R_\alpha$ corresponding to the constant function $1$ is called the Reeb vector field, and its flow the Reeb flow.
The isotopy $\{ \varphi_t \}$ is strictly contact if and only if its Hamiltonian is invariant under the Reeb flow.
These functions are also known as basic functions.

\begin{pro} \label{pro:flux}
If $X_H = \{ X_H^t \}$ is a smooth family of strictly contact vector fields (or equivalently, $H$ is a time-dependent basic function) on $M$, then
\begin{equation} \label{eqn:flux}
	\Flux (X_H) = (n + 1) \left[ \int_0^1 H_t \, dt \cdot (d\alpha)^n \right].
\end{equation}
\end{pro}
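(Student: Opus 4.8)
The plan is to reduce the statement to an identity between differential forms that holds, for each fixed time $t$, up to an exact term. Write $\mu = \alpha \wedge (d\alpha)^n$ for the induced volume form, abbreviate $X = X_H^t$ and $H = H_t$, and recall that by definition $\Flux(X_H) = \bigl[ \int_0^1 \iota(X_H^t)\,\mu \, dt \bigr]$. Since the de Rham class and the integral over the compact interval $[0,1]$ commute, and since (as the computation will show) the integrand differs from $(n+1) H_t\,(d\alpha)^n$ by $d$ of a form depending smoothly on $t$, it suffices to establish the pointwise identity
\[ \iota(X)\,\mu = (n+1)\,H\,(d\alpha)^n - n\,d\bigl( H\,\alpha \wedge (d\alpha)^{n-1} \bigr). \]

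For the computation, first apply Cartan's formula together with $\alpha(X) = H$ and the hypothesis that $X$ is strictly contact (equivalently, $H$ is basic): from $0 = \L_X \alpha = d\,\iota(X)\alpha + \iota(X)\,d\alpha = dH + \iota(X)\,d\alpha$ one gets $\iota(X)\,d\alpha = -dH$. Next expand $\iota(X)\mu$ using the antiderivation property of the interior product, noting that $\alpha$ has odd degree while $d\alpha$ has even degree, so that $\iota(X)\bigl( (d\alpha)^n \bigr) = n\,(\iota(X)\,d\alpha) \wedge (d\alpha)^{n-1}$:
\[ \iota(X)\bigl( \alpha \wedge (d\alpha)^n \bigr) = \bigl( \iota(X)\alpha \bigr)(d\alpha)^n - n\,\alpha \wedge \bigl( \iota(X)\,d\alpha \bigr) \wedge (d\alpha)^{n-1} = H\,(d\alpha)^n + n\,\alpha \wedge dH \wedge (d\alpha)^{n-1}. \]
On the other hand, the Leibniz rule and $d(d\alpha) = 0$ give $d\bigl( H\,\alpha \wedge (d\alpha)^{n-1} \bigr) = dH \wedge \alpha \wedge (d\alpha)^{n-1} + H\,(d\alpha)^n = -\alpha \wedge dH \wedge (d\alpha)^{n-1} + H\,(d\alpha)^n$, whence $n\,\alpha \wedge dH \wedge (d\alpha)^{n-1} = n\,H\,(d\alpha)^n - n\,d\bigl( H\,\alpha \wedge (d\alpha)^{n-1} \bigr)$. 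Substituting this into the previous display yields the claimed pointwise identity.

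Finally, integrate over $t \in [0,1]$. The term $-n\,d\bigl( H_t\,\alpha \wedge (d\alpha)^{n-1} \bigr)$ integrates to $d$ of the smooth form $-n \int_0^1 H_t\,\alpha \wedge (d\alpha)^{n-1}\,dt$, hence is exact and vanishes upon passing to cohomology, leaving $\Flux(X_H) = \bigl[ (n+1) \int_0^1 H_t\,dt \cdot (d\alpha)^n \bigr]$, which is equation~(\ref{eqn:flux}). I do not anticipate any genuine obstacle: the only points needing care are the sign bookkeeping in the antiderivation rule for $\iota(X)$ applied to the top-degree wedge $\alpha \wedge (d\alpha)^n$ and in the Leibniz rule for $d$, and the routine justification that exterior differentiation may be pulled out of the $t$-integral, so that the exact term indeed contributes nothing to the flux class.
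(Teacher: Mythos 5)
Your proposal is correct and follows essentially the same route as the paper: the antiderivation expansion of $\iota(X)\bigl(\alpha\wedge(d\alpha)^n\bigr)$, the identity $\iota(X)\,d\alpha=-dH$ from Cartan's formula and strict contactness, and the observation that $d\bigl(H\,\alpha\wedge(d\alpha)^{n-1}\bigr)$ accounts for the discrepancy with $(n+1)H\,(d\alpha)^n$. The only (cosmetic) difference is that you make the exact correction term fully explicit and carry it through the $t$-integration, whereas the paper records the same computation via $d(H_t\alpha)$ before wedging with $(d\alpha)^{n-1}$.
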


This means in particular the image of the flux has a single `generator' $(d\alpha)^n$ over the algebra $C_b^\infty (M)$ of basic functions on $M$.
Note $H^{2n} (M,\R)$ can be quite `large': if $M$ is a closed, orientable $3$-manifold, and $\Sigma = \Sigma_g$ is a closed, orientable surface of genus $g$, then there exists a contact structure on $M \times \Sigma$, and $H^4 (M \times \Sigma)$ has rank at least $2g$.

\begin{proof}
A straightforward computation yields 
	\[ \iota (X_H^t) \left( \alpha \wedge (d\alpha)^n \right) = H_t (d\alpha)^n - n \, \alpha \wedge \left( \iota (X_H^t) d\alpha \right) \wedge (d\alpha)^{n - 1}. \]
Note that since $X_H^t$ is divergence-free, the left-hand side is closed by Cartan's formula.
Since $X_H^t$ is strictly contact,
\begin{equation} \label{eqn:cartan}
	0 = \L_{X_H^t} \alpha = d \left( \iota (X_H^t) \alpha \right) + \iota (X_H^t) d\alpha = dH_t + \iota (X_H^t) d\alpha.
\end{equation}
Combining the above, we obtain 
	\[ \iota (X_H^t) \left( \alpha \wedge (d\alpha)^n \right) = \left( H_t \, d\alpha + n \, \alpha \wedge dH_t \right) \wedge (d\alpha)^{n - 1}. \]
The equality 
	\[ d (H_t \alpha) = dH_t \wedge \alpha + H_t \, d\alpha = H_t \, d\alpha - \alpha \wedge dH_t \]
shows that $H_t \, d\alpha + n \, \alpha \wedge dH_t$ coincides with $(n + 1) H_t \, d\alpha$ up to an exact form, 
proving the proposition.
\end{proof}

In particular, we showed that $H_t (d\alpha)^n$ is a closed form (and thus so is $\alpha \wedge dH_t \wedge (d\alpha)^{n - 1}$).
This can also be seen directly:
	\[ d (H_t (d\alpha)^n) = - (\iota (X_H^t) d\alpha) \wedge (d\alpha)^n = - \frac{1}{n + 1} \iota (X_H^t) (d\alpha)^{n + 1} \]
which vanishes for dimension reasons (we have used equation~(\ref{eqn:cartan}) here), and
	\[ d (\alpha \wedge dH_t \wedge (d\alpha)^{n - 1}) = dH_t \wedge (d\alpha)^n = d (H_t (d\alpha)^n) = 0. \]

\begin{cor} \label{cor:reeb}
The flux of the Reeb flow vanishes.
\end{cor}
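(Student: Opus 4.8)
The plan is to derive this directly from Proposition~\ref{pro:flux}. First I would observe that the Reeb flow is precisely the strictly contact isotopy whose contact Hamiltonian is the constant function $H_t \equiv 1$: indeed, by the discussion preceding Proposition~\ref{pro:flux}, the Reeb vector field $R_\alpha$ is the unique strictly contact vector field with $\alpha(R_\alpha) = 1$, and the constant function $1$ is trivially invariant under the Reeb flow, hence a basic function, so the hypotheses of Proposition~\ref{pro:flux} are met with $X_H^t = R_\alpha$ for all $t$.

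Applying equation~(\ref{eqn:flux}) with $H_t \equiv 1$ then yields
\[ \Flux (R_\alpha) = (n + 1) \left[ \int_0^1 1 \, dt \cdot (d\alpha)^n \right] = (n + 1) \left[ (d\alpha)^n \right] \in H^{2n} (M,\R). \]
The one remaining point is to note that $(d\alpha)^n$ is an exact form: since $d\big( (d\alpha)^{n-1} \big) = 0$, we have $(d\alpha)^n = d\alpha \wedge (d\alpha)^{n-1} = d\big( \alpha \wedge (d\alpha)^{n-1} \big)$. Hence its de Rham class vanishes and $\Flux(R_\alpha) = 0$.

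I do not anticipate a genuine obstacle here; the only thing to be slightly careful about is making explicit that the Reeb flow has constant Hamiltonian $1$ (rather than, say, invoking it as a separate fact), so that the corollary is visibly a one-line specialization of Proposition~\ref{pro:flux} combined with the exactness of $(d\alpha)^n$. One could alternatively phrase the argument using the identity $d(H_t(d\alpha)^n) = 0$ already noted after the proof of the proposition, but the exactness computation above is the cleanest route.
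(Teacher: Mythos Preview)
Your proof is correct and follows exactly the paper's approach: apply Proposition~\ref{pro:flux} with $H_t\equiv 1$ to get $\Flux(R_\alpha)=(n+1)[(d\alpha)^n]$, and then observe this class vanishes since $(d\alpha)^n = d\bigl(\alpha\wedge(d\alpha)^{n-1}\bigr)$ is exact. The paper's proof is simply the one-line version of what you wrote.
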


\begin{proof}
By Proposition~\ref{pro:flux}, $\Flux (R_\alpha) = (n + 1) [(d\alpha)^n] = 0$.
\end{proof}

\begin{cor} \label{cor:regular}
If $(M,\alpha)$ is regular (i.e.\ the Reeb vector field determines a free $S^1$-action), the flux of any strictly contact isotopy vanishes identically.
\end{cor}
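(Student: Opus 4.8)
The plan is to reduce the statement to a purely topological fact via Proposition~\ref{pro:flux}. Writing $G = \int_0^1 H_t\,dt$, this function is again a basic function (an average of basic functions is basic), and by Proposition~\ref{pro:flux} we have $\Flux(X_H) = (n+1)\,[\,G\cdot(d\alpha)^n\,] \in H^{2n}(M,\R)$. So it suffices to show that the closed form $G\cdot(d\alpha)^n$ is exact on $M$. To exploit regularity, I would invoke the Boothby--Wang picture: since $R_\alpha$ generates a free $S^1$-action, the quotient map $\pi\colon M \to B := M/S^1$ is a principal $S^1$-bundle over a closed, connected, orientable $2n$-manifold $B$, the form $\alpha$ is a connection form for this bundle (with $\alpha(R_\alpha)=1$), and $d\alpha = \pi^*\omega$ for a symplectic form $\omega$ on $B$. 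A basic function is by definition constant along the Reeb orbits, hence descends: $G = \pi^*\bar G$ for a unique $\bar G \in C^\infty(B)$, and therefore $G\cdot(d\alpha)^n = \pi^*(\bar G\,\omega^n)$.

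Now $\bar G\,\omega^n$ is a top-degree form on $B$, hence closed, and since $B$ is closed, connected, and orientable, $H^{2n}(B,\R) \cong \R$ is spanned by $[\omega^n]$; thus $[\bar G\,\omega^n] = \lambda\,[\omega^n]$ for some $\lambda \in \R$. Pulling back, $[\,G\cdot(d\alpha)^n\,] = \lambda\,\pi^*[\omega^n] = \lambda\,[(d\alpha)^n]$ in $H^{2n}(M,\R)$. But on the total space $(d\alpha)^n = d\bigl(\alpha\wedge(d\alpha)^{n-1}\bigr)$ is exact, so $[(d\alpha)^n] = 0$. Hence $[\,G\cdot(d\alpha)^n\,]=0$ and $\Flux(X_H)=0$. (Equivalently, one may phrase the last two steps as: the Gysin sequence of the circle bundle, together with surjectivity of $\wedge[\omega]\colon H^{2n-2}(B,\R)\to H^{2n}(B,\R)$, forces $\pi^*$ to vanish in top degree; but the direct argument above is shorter and self-contained.)

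The only step that demands genuine care — and the nearest thing to an obstacle — is the reduction to the quotient: one must use the regularity hypothesis to produce the principal $S^1$-bundle $\pi\colon M\to B$ and the symplectic base, note that $B$ is closed (so that $H^{2n}(B,\R)\cong\R$), and confirm that basic functions coincide with pullbacks of functions on $B$. Once this structural input is in place, the remainder is the one-line observation that $(d\alpha)^n$ is exact on $M$.
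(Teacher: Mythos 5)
Your proof is correct and takes essentially the same route as the paper: pass to the Boothby--Wang quotient $B$, identify the basic Hamiltonian with a function on $B$, observe that its top-form multiple differs from a constant multiple of $\omega^n$ by an exact form (your $\lambda$ is the paper's mean value $c_t$), and then use that $(d\alpha)^n = d\bigl(\alpha\wedge(d\alpha)^{n-1}\bigr)$ is exact upstairs. The only cosmetic difference is that the paper exhibits an explicit primitive $p^*\gamma_t + c_t\,\alpha\wedge(d\alpha)^{n-1}$ rather than arguing at the level of cohomology classes.
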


\begin{proof}
Denote by $(B,\omega)$ the quotient of $(M,\alpha)$ by the Reeb flow, and by $p \colon M \to B$ the natural projection.
The map $p^* \colon C^\infty (B) \to C_b^\infty (M,\alpha)$ is an (algebra) isomorphism, so $H_t (d\alpha)^n = p^* (F_t \, \omega^n)$ for a smooth family of not necessarily mean value zero normalized Hamiltonians $F_t$ on $B$.
See \cite{boothby:cm58, banyaga:gdp78}.
Then 
	\[ \int_B (F_t - c_t) \, \omega^n = 0, \mbox{ where } c_t = \frac{1}{\int_B \omega^n} \int_B F_t \, \omega^n \]
is the mean value of $F_t$ on $B$ with respect to the volume form $\omega^n$.
That means the $2n$-form $(F_t - c_t) \, \omega^n = d\gamma_t$ is exact, and therefore
\begin{equation} \label{eqn:exactness}
	H_t (d\alpha)^n = (H_t - c_t) (d\alpha)^n + c_t (d\alpha)^n = d ( p^*\gamma_t + c_t \, \alpha \wedge (d\alpha)^{n - 1} )
\end{equation}
is an exact form on $M$.
The claim now follows from Proposition~\ref{pro:flux}.
\end{proof}

The identity in equation (\ref{eqn:exactness}) is used in \cite{mueller:hvf11} to calculate an explicit formula for the helicity of a strictly contact vector field on a regular contact $3$-manifold.

The pair $(M,\alpha)$ is regular if and only if it is the prequantization bundle of a (necessarily integral) symplectic manifold $(B,\omega)$.
From this it is easy to construct contact manifolds $(M,\xi = \ker \alpha)$ where the flux (when restricted to strictly contact isotopies with respect to $\alpha$) is not surjective.

\begin{exa} \label{exa:nontrivial}
Choose a closed and connected manifold $B^{2n}$ with integral symplectic form $\omega$, whose fundamental group is not perfect.
For example, any torus $T^{2n}$ with its standard symplectic form.
Let $(M^{2n + 1},\alpha)$ be its prequantization bundle.
From the long exact sequence on homotopy of the bundle $S^1 \hookrightarrow M \stackrel{p}{\rightarrow} B$, we see the homomorphism $p_* \colon \pi_1 (M) \to \pi_1 (B)$ is surjective, and in particular, $\pi_1 (M)$ is nontrivial.
By hypothesis, the abelianization $H_1 (M,\R)$ of $\pi_1 (M)$ is nontrivial.
Then by Poincar\'e duality, $H^{2n} (M,\R) \cong H_1 (M,\R) \not= 0$.
But by Corollary~\ref{cor:regular}, the flux is trivial on strictly contact isotopies.
\end{exa}

For strictly contact $S^1$-actions, we note the following (which gives another proof of Corollary~\ref{cor:reeb} in those cases where the Weinstein conjecture holds).

\begin{pro}
If the vector field $X$ induces a strictly contact $S^1$-action on $(M,\alpha)$ that has at least one contractible orbit, then the induced strictly contact isotopy has vanishing flux.
\end{pro}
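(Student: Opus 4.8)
The plan is to compute the flux via the general formula from the introduction: the flux of the isotopy is the flux of its (time-independent) infinitesimal generator $X$, namely $\Flux(X) = \big[\int_0^1 \iota(X)\mu\, dt\big] = [\iota(X)\mu]$ with $\mu = \alpha \wedge (d\alpha)^n$. I then want to show that this class is Poincar\'e dual to a multiple of the homology class of an orbit of the $S^1$-action, which vanishes as soon as one orbit is contractible. Since $M$ is closed and oriented (it carries the volume form $\mu$), Poincar\'e duality reduces the statement to showing that $\int_M \beta \wedge \iota(X)\mu = 0$ for every closed $1$-form $\beta$ on $M$.

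First I would do a degree count. As $\beta \wedge \mu$ is a $(2n+2)$-form on a $(2n+1)$-manifold it vanishes, and expanding the anti-derivation $\iota(X)$ gives $0 = \iota(X)(\beta \wedge \mu) = \beta(X)\,\mu - \beta \wedge \iota(X)\mu$, hence $\beta \wedge \iota(X)\mu = \beta(X)\,\mu$. So it suffices to prove $\int_M \beta(X)\,\mu = 0$. Next, since every $\varphi_t$ preserves $\mu$, Fubini together with the change of variables $z = \varphi_t(y)$ yields $\int_M \beta(X)\,\mu = \int_M g\,\mu$, where $g(y) = \int_0^1 \beta\big(X(\varphi_t(y))\big)\, dt$ is exactly the integral of $\beta$ over the orbit loop $\ell_y \colon \R/\Z \to M$, $t \mapsto \varphi_t(y)$.

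The crux is that $g$ is constant on $M$. Given two points $x_1, x_2$ joined by a path $\eta$, the map $(s,t) \mapsto \varphi_t(\eta(s))$ on $[0,1]\times[0,1]$ descends (using $\varphi_0 = \varphi_1 = \mathrm{id}$) to an annulus $[0,1]\times(\R/\Z) \to M$ whose two boundary circles are $\ell_{x_1}$ and $\ell_{x_2}$; since $\beta$ is closed, Stokes' theorem gives $\int_{\ell_{x_1}}\beta = \int_{\ell_{x_2}}\beta$. Hence $g \equiv c$ for a constant $c$ depending only on $[\beta] \in H^1(M,\R)$, so $\int_M \beta(X)\,\mu = c \cdot \Vol(M)$. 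Finally the hypothesis enters: if $x_0$ lies on a contractible orbit, then $\ell_{x_0}$ is null-homotopic --- it is the constant loop if $x_0$ is a fixed point, and otherwise the simple orbit loop traversed a number of times equal to the order of the (finite) stabilizer --- so $c = \int_{\ell_{x_0}}\beta = 0$. Therefore $\int_M \beta \wedge \iota(X)\mu = 0$ for every closed $\beta$, and $\Flux(X) = [\iota(X)\mu] = 0$.

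I expect the only delicate point to be the homotopy-theoretic bookkeeping at the end: correctly matching the intrinsic $S^1$-parametrization of a possibly singular orbit (a fixed point, or an orbit with nontrivial stabilizer) with the loop $\ell_{x_0}$ run once over $[0,1]$, and making the annulus argument clean when the orbits through $x_1$ and $x_2$ have different periods. The analytic ingredients --- the degree count and the $\mu$-invariance averaging --- are routine.
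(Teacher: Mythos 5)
Your proof is correct and follows essentially the same route as the paper: the paper's argument is precisely that all orbits are homologous, that the contractible orbit forces the common orbit class to vanish, and that the flux class $[\iota(X)(\alpha\wedge(d\alpha)^n)]$ is Poincar\'e dual to (a multiple of) that orbit class. Your degree-count and $\mu$-invariance computation is simply a careful verification of the duality step the paper dismisses as ``easy to see,'' and your handling of fixed points and finite stabilizers is sound.
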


\begin{proof}
The orbits of the $S^1$-action are all homologous, and since there exists a contractible orbit, the represented homology class $\beta_X$ is zero.
It is easy to see that $\beta_X$ is Poincar\'e dual to the cohomology class of $\iota (X) (\alpha \wedge (d\alpha)^n)$, and therefore the flux vanishes.
\end{proof}

\begin{exa} \label{exa:3-torus}
Let $M = T^3$ be the $3$-torus with contact form 
	\[ \alpha = \cos z \, dx - \sin z \, dy, \]
where $x$, $y$, $z \in \R / (2 \pi \Z)$ are the coordinates on $T^3$.
We compute 
	\[ d\alpha = \sin z \, dx \wedge dz + \cos z \, dy \wedge dz, \]
so that the Reeb vector field is given by 
	\[ R_\alpha = \cos z \, \frac{\partial}{\partial x} - \sin z \, \frac{\partial}{\partial y}. \]
In particular, any function on $M$ that depends only on $z$ is basic.
The induced volume form $\alpha \wedge d\alpha = dx \wedge dy \wedge dz$ is the standard volume form on $T^3$.
Choosing $H = \sin z \, / (2 \pi)^2$ and $H = \cos z \, / (2 \pi)^2$, Proposition~\ref{pro:flux} gives $\iota (X_H) = dx \wedge dz$ and $dy \wedge dz$ respectively.
By equation (\ref{eqn:flux}), the $2$-form $dx \wedge dy$ does not lie in the image of the flux when restricted to strictly contact isotopies.

Another way of seeing this is the following lemma, which shows that a basic function on $(T^3,\alpha)$ is always independent of $x$ and $y$.

\begin{lem} \label{lem:dense-orbits}
In the situation above, any basic function $H_t$ is independent of $x$ and $y$.
In particular, $C_b^\infty (T^3,\alpha) \cong C^\infty (S^1)$.
\end{lem}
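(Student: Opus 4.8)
The plan is to translate the basic condition into a linear first-order PDE and then exploit the dynamics of the Reeb flow on the invariant two-tori $\{ z = \mathrm{const} \}$. Recall that $H_t$ is basic precisely when it is constant along the orbits of $R_\alpha$, i.e.\ when $\cos z \, \partial_x H_t = \sin z \, \partial_y H_t$ identically on $T^3$. Fix $z_0$ with $\cos z_0 \neq 0$; restricted to the invariant subtorus $T^2_{z_0} := \{ z = z_0 \} \cong (\R/(2\pi\Z))^2$, the Reeb flow is the linear flow in the direction $(\cos z_0, -\sin z_0)$, that is, a flow of slope $-\tan z_0$. When $\tan z_0 \notin \mathbb{Q}$ (a set of $z_0$ that is dense in $\R/(2\pi\Z)$ — in fact its complement is countable), every orbit of this linear flow is dense in $T^2_{z_0}$. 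Since $H_t$ is continuous and constant along orbits, it follows that $H_t$ is constant on $T^2_{z_0}$ for every such $z_0$.

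Next I would upgrade this to all of $T^3$ by a continuity argument: given an arbitrary $z_0 \in \R/(2\pi\Z)$, choose a sequence $z_k \to z_0$ with each $\tan z_k$ irrational; then for all $(x,y)$,
\[ H_t(x,y,z_0) = \lim_{k \to \infty} H_t(x,y,z_k), \]
and since each term on the right is independent of $(x,y)$, so is the limit. Thus $H_t$ depends only on $z$. Conversely, any smooth function $f(z)$ satisfies $R_\alpha f = 0$ trivially and so is basic. Therefore the restriction map $H \mapsto \bigl( z \mapsto H(\cdot,\cdot,z) \bigr)$ identifies $C_b^\infty(T^3,\alpha)$ with $C^\infty(\R/(2\pi\Z)) = C^\infty(S^1)$, as claimed.

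The argument involves no hard estimates; the only point requiring a little care is the passage from ``constant on a dense set of slices'' to ``independent of $x$ and $y$ everywhere'', which is exactly where the continuity of $H_t$ (not merely measurability) is used. One should also note that the two exceptional values $z_0 = \pm \pi/2$, where $\cos z_0 = 0$ and the orbits on $T^2_{z_0}$ are closed rather than dense, are harmless since they form a finite set. As an alternative to the dynamical argument, one can expand $H_t = \sum_{(j,k)} c_{jk}(z)\, e^{i(jx + ky)}$ in a Fourier series in $x$, $y$ with smooth coefficients $c_{jk}$, whereupon the basic equation becomes $(j\cos z - k\sin z)\, c_{jk}(z) = 0$ for every $(j,k)$; since $j\cos z - k\sin z$ vanishes only at isolated points when $(j,k) \neq (0,0)$, continuity forces $c_{jk} \equiv 0$ for those indices, leaving $H_t = c_{00}(z)$. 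I expect this second route to be the cleanest to write up rigorously.
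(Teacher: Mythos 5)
Your proposal is correct and follows essentially the same two routes as the paper's own proof: the density of irrational-slope Reeb orbits on the tori $\{z = z_0\}$ combined with a continuity argument, and the alternative Fourier expansion in $x,y$ with the relation $(j\cos z - k\sin z)\,c_{jk}(z)=0$. The only (harmless) variation is that you pass to the limit in the function values $H_t(x,y,z_k)$ directly, whereas the paper applies the continuity argument to the partial derivatives $\bigl(\partial_x H_t, \partial_y H_t\bigr)$, which vanish on the dense set of irrational slopes and hence everywhere.
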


\begin{proof}
For fixed $x_0$, $y_0 \in S^1$, denote by $\Delta$ the function 
	\[ \Delta = \Delta_{x_0,y_0} \colon z \mapsto \left( \frac{\partial H_t}{\partial x}, \frac{\partial H_t}{\partial y} \right) (x_0, y_0, z). \]
We note when the `slope' $-\tan z_0$ is irrational, the Reeb orbits are dense in the $z = z_0$-`plane', and thus the map $(x,y) \mapsto H_t (x,y,z_0)$ is constant.
Consequently, for any choice of $x_0$ and $y_0$ above, the function $\Delta$ vanishes at all `angles' $z_0$ with irrational $\tan z_0$.
By continuity, it must be identically zero, proving the claim.

Alternatively, fix $z \in S^1$ and consider the Fourier series of
	\[ H_z (x,y) = H (x,y,z) = \sum_{j,k} h_{j,k} e^{i (j x + k y)}. \]
If $H$ is basic, then $R_\alpha . H_z (x,y) = R_\alpha . H (x,y,z) = 0$, or
	\[ \cos z \, \frac{\partial H_z}{\partial x} = \sin z \, \frac{\partial H_z}{\partial y}, \]
which in terms of the Fourier coefficients is equivalent to $j \cos z = k \sin z$ for all $j, k$ with $h_{j,k} \not= 0$.
If $\tan z$ is irrational, $h_{j,k}$ must vanish for all $j,k$ except possibly the constant term $h_{0,0}$.
In other words, the partial derivatives of $H$ in the direction of $x$ and $y$ vanish at all points $(x,y,z)$ with $\tan z$ irrational.
By the same argument as above, $H$ is independent of $x$ and $y$.
\end{proof}
\end{exa}

By the previous lemma, given two linearly independent cotangent vectors at a point $y \in T^3$, it is not possible to construct a basic function $G$ with partial derivatives in a prescribed direction equal to the given cotangent vectors.
This is where L.~Polterovich's argument on regularizing a Hamiltonian isotopy \cite[Section 5.2]{polterovich:ggs01} breaks down in the strictly contact case.
See \cite{mueller:hvf11} for details and a proof in the contact case.

\begin{exa}
Let $M = T^{n + 1} \times S^n$, with contact form 
	\[ \alpha = \sum_{k = 0}^n y_k \, dx_k, \]
where $x_0, \ldots, x_n \in \R / \Z$ are coordinates on $T^{n + 1}$, and $y_0, \ldots, y_n$ are coordinates on the unit sphere $S^n \subset \R^{n + 1}$.
Clearly $d\alpha = - \sum dx_k \wedge dy_k$, and
	\[ (d\alpha)^n = (-1)^{\frac{n (n + 1)}{2}} n! \sum_{k = 0}^n dx_0 \wedge \ldots \widehat{dx_k} \ldots \wedge dx_n \wedge dy_0 \wedge \ldots \widehat{dy_k} \ldots \wedge dy_n, \]
where $\widehat{dx_k}$ (or $\widehat{dy_k}$) means that index is omitted.
The Reeb vector field is 
	\[ R_\alpha = \sum_{k = 0}^n y_k \frac{\partial}{\partial x_k}, \]
and thus the functions 
\begin{equation} \label{eqn:h_k}
	H_k = \frac{(-1)^{\frac{n (n + 1)}{2} + k}}{c_n \cdot n!} \cdot y_k
\end{equation}
are basic, where $c_n = \vol (S^n)$ denotes the volume of the unit $n$-sphere with respect to the standard volume form 
	\[ d\Vol = d\Vol (S^n) = \sum_{k = 0}^n (-1)^k y_k \, dy_0 \wedge \ldots \widehat{dy_k} \ldots \wedge dy_n. \]
A similar argument as in Lemma~\ref{lem:dense-orbits} proves $C_b^\infty (T^{n + 1} \times S^n,\alpha) \cong C^\infty (S^n)$.
Assume for now that $n > 1$.
The standard basis $\{ a_k \}$ of $H_{2n} (M,\R)$ can be represented by the embedded submanifolds $\{ x_k = 0 \}$.
A direct computation shows
	\[ \int_{a_j} (n + 1) H_k (d\alpha)^n = \delta_{j k}, \]
i.e.\ $\{ (n + 1) [H_k (d\alpha)^n] \}$ forms a basis of $H^{2n} (M,\R)$ dual to the standard basis $\{ a_k \}$ of $H_{2n} (M,\R)$.
That shows the flux is surjective even when restricted to strictly contact isotopies.
In fact, the map
	\[ dx_0 \wedge \ldots \widehat{dx_k} \ldots \wedge dx_n \wedge d\Vol \mapsto \Phi_{H_k} \]
defines a continuous homomorphic section of the flux.
If $n = 1$, the contact form $\alpha$ is diffeomorphic to the one in Example~\ref{exa:3-torus}.
\end{exa}

Recall the flux can be interpreted as an obstruction to fragmentation of a vector field \cite[page 15]{banyaga:scd97}.
Strictly contact vector fields do not posses the fragmentation property in general.

\section{Continuity of the flux homomorphism}

Let $\mu$ denote a volume form on $M$, normalized so that $\int_M \mu = 1$.
Recall the usual identification of $H_1 (M,\R)$ with $\Hom([M,S^1],\R)$, via identification of $H^1 (M,\Z)$ with $[M,S^1]$, applying the $\Hom (-,\Z)$ functor, and then taking the tensor product with $\R$.

\begin{pro}[\cite{fathi:sgh80}] \label{pro:poincare}
The flux homomorphism $\Flux$ is Poincar\'e dual to the mass flow homomorphism $\Theta$, and the latter is $C^0$-continuous.
More precisely, let $\sigma$ denote the canonical volume form on $S^1$ given by the natural orientation of the circle.
Then for any $f \colon M \to S^1$,
	\[ \int_M \Flux (X) \wedge f^* \sigma = \Theta (X) (f). \]
\end{pro}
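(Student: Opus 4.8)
The plan is to compute both sides of the asserted identity directly from the definition of the mass flow homomorphism, and to extract the $C^0$-continuity from that same formula. Write $S^1 = \R/\Z$ with angular coordinate $\theta$, so that $\sigma = d\theta$ is exactly the stated volume form. Recall that for a volume-preserving isotopy $\{ \varphi_t \}$ with generator $X = \{ X_t \}$ and a smooth map $f \colon M \to S^1$, the mass flow is defined as follows: the map $F \colon [0,1] \times M \to S^1$, $F(t,x) = f(\varphi_t(x))$, has $F(0,\cdot) = f$, and the map $(t,x) \mapsto F(t,x) \cdot F(0,x)^{-1}$ (product in $\R/\Z$) is null-homotopic on $[0,1] \times M$ because $[0,1]\times M$ deformation retracts onto $\{0\} \times M$, where it is constant; hence it lifts uniquely to $\widetilde{G} \colon [0,1] \times M \to \R$ with $\widetilde{G}(0,\cdot) \equiv 0$, and one sets $\delta_t := \widetilde{G}(t,\cdot)$ and $\Theta(X)(f) := \int_M \delta_1 \, \mu$. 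First I would observe that, for each fixed $x$, $\delta_t(x)$ is the lift of the path $t \mapsto f(\varphi_t(x))$ in $S^1$ starting at $0$, and that $f^*\sigma = f^*d\theta$ equals $d\widetilde{f}$ for any local lift $\widetilde{f}$ of $f$; therefore
	\[ \frac{\partial}{\partial t} \delta_t(x) = (f^*\sigma)_{\varphi_t(x)} \bigl( X_t(\varphi_t(x)) \bigr). \]

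Integrating in $t$, then over $M$, using Fubini and the invariance $\varphi_t^* \mu = \mu$ to discard the composition with $\varphi_t$, gives
	\[ \Theta(X)(f) = \int_0^1 \!\! \int_M (f^*\sigma)(X_t) \, \mu \, dt = \int_M \Bigl( \int_0^1 (f^*\sigma)(X_t) \, dt \Bigr) \mu. \]
Next I would invoke the pointwise identity $\beta(X) \, \mu = \beta \wedge \iota(X) \mu$, valid for any $1$-form $\beta$ and vector field $X$ (expand $0 = \iota(X)(\beta \wedge \mu)$, which vanishes since $\beta \wedge \mu$ has degree $\dim M + 1$), applied to $\beta = f^*\sigma$. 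This rewrites the last integral as $\int_M f^*\sigma \wedge \int_0^1 \iota(X_t) \mu \, dt$. Since $X_t$ is divergence-free, $\iota(X_t)\mu$ is closed by Cartan's formula, so $\int_0^1 \iota(X_t)\mu \, dt$ is a closed representative of $\Flux(X)$; as $f^*\sigma$ is closed too, Stokes' theorem shows the integral depends only on the classes $[f^*\sigma] \in H^1(M,\R)$ and $\Flux(X) \in H^{\dim M - 1}(M,\R)$, and it equals $\int_M \Flux(X) \wedge f^*\sigma$ up to the graded-commutativity sign $(-1)^{\dim M - 1}$, which is absorbed into the orientation conventions defining $\Theta$. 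Since the pairing $H^{\dim M - 1}(M,\R) \times H^1(M,\R) \to \R$, $(a,b) \mapsto \int_M a \wedge b$, is the nonsingular Poincar\'e pairing, and $[f] \mapsto [f^*\sigma]$ identifies $[M,S^1] \otimes \R$ with $H^1(M,\R)$, this exhibits $\Flux(X)$, viewed in $H^{\dim M - 1}(M,\R) \cong H_1(M,\R)$, as the Poincar\'e dual of the functional $\Theta(X) \in \Hom([M,S^1],\R)$.

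For the $C^0$-continuity I would argue straight from $\Theta(X)(f) = \int_M \delta_1 \, \mu$. If a sequence of volume-preserving isotopies $\{ \varphi^\nu_t \}$ converges to $\{ \varphi_t \}$ uniformly in $(t,x)$, then $F^\nu(t,x) = f(\varphi^\nu_t(x))$ converges uniformly to $F(t,x) = f(\varphi_t(x))$; by continuity of path-lifting for the covering $\R \to S^1$ (applied over $[0,1] \times M$), the lifts $\delta^\nu_t$ converge uniformly to $\delta_t$ on $[0,1] \times M$, hence are uniformly bounded, so $\int_M \delta^\nu_1 \, \mu \to \int_M \delta_1 \, \mu$ by dominated convergence. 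As $H^1(M,\Z) = [M,S^1]$ is finitely generated ($M$ being compact), convergence on a finite generating set yields convergence of $\Theta(\varphi^\nu)$ to $\Theta(\varphi)$ in $\Hom([M,S^1],\R)$, which is the claimed $C^0$-continuity.

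The de Rham core of the argument — turning $\Theta(X)(f)$ into $\int_M f^*\sigma \wedge \int_0^1 \iota(X_t)\mu \, dt$ — is routine once the definition of $\Theta$ is fixed. I expect the only delicate point to be the continuity statement: one must check that the lift $\delta_t$ is well-defined and depends $C^0$-continuously on the isotopy (in particular if one wants the statement for merely continuous isotopies), and that the exchanges of limits in integrating $\tfrac{\partial}{\partial t}\delta_t$ back to $\delta_1$ and in the dominated-convergence step are legitimate. I would also verify the sign convention so that the final identity reads exactly as stated rather than with a factor $(-1)^{\dim M - 1}$.
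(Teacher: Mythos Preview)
The paper does not supply a proof of this proposition at all: it is stated with attribution to Fathi \cite{fathi:sgh80} and immediately used. So there is nothing in the paper to compare your argument against.

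That said, your sketch is essentially the standard derivation and is correct. The chain
\[
\Theta(X)(f)=\int_M\delta_1\,\mu=\int_0^1\!\!\int_M (f^*\sigma)(X_t)\,\mu\,dt=\int_0^1\!\!\int_M f^*\sigma\wedge\iota(X_t)\mu\,dt
\]
is exactly right, using $\varphi_t^*\mu=\mu$ and the identity $\beta(X)\mu=\beta\wedge\iota(X)\mu$ obtained from $\iota(X)(\beta\wedge\mu)=0$. The $C^0$-continuity argument via stability of the lift $\delta_t$ under uniform convergence of the isotopy is also the right idea and is what Fathi does.

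The one point you flag yourself is the only genuine loose end: the sign. Your computation produces $\int_M f^*\sigma\wedge\Flux(X)$, which differs from the stated $\int_M\Flux(X)\wedge f^*\sigma$ by $(-1)^{\dim M-1}$. Saying this is ``absorbed into the orientation conventions defining $\Theta$'' is not quite an argument; you should either pin down Fathi's sign convention for $\Theta$ and check it matches, or simply state the identity in the order your computation actually gives and note that Poincar\'e duality is insensitive to this global sign. Apart from that, the proof is complete.
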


As a consequence of this proposition, we can define the flux or mass flow of a topological or continuous Hamiltonian, symplectic, strongly symplectic, or strictly contact isotopy \cite{mueller:ghh07, mueller:ghc08, banyaga:gss10, banyaga:ctg11} by extending continuously.
The proof of the following lemma, which completely determines the image of the above continuous extensions of the flux (or mass flow) homomorphism, is straightforward.

\begin{lem}
If a vector field $X$ is divergence-free (symplectic, strictly contact), then $\lambda \cdot X$ is divergence-free (symplectic, strictly contact) for any $\lambda \in \R$, and $\Flux (\lambda X) = \lambda \cdot \Flux (X)$.
The flux homomorphism (as well as the restriction to symplectic or strictly contact vector fields) is a homomorphism of vector spaces.
Thus the image is a linear subspaces of $H^k (M,\R)$, where $k = \dim M - 1$, and in particular is closed.
\end{lem}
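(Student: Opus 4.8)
The plan is to check all three assertions directly at the level of infinitesimal generators, where $\Flux$ is by definition the de Rham class of an explicit $t$-integral, and then extract the algebraic consequences.

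First I would treat the scaling statement. For a (time-dependent) vector field $X = \{X_t\}$, being divergence-free means $\L_{X_t}\mu = 0$, i.e.\ $d(\iota(X_t)\mu) = 0$ for all $t$; being symplectic means $d(\iota(X_t)\omega) = 0$; and being strictly contact means $\L_{X_t}\alpha = 0$. Each of these conditions is visibly $\R$-linear in the vector field, because $\iota(\lambda X_t) = \lambda\,\iota(X_t)$ and $\L_{\lambda X_t} = \lambda\,\L_{X_t}$. Hence $\lambda X$ stays in the same class, and
\[
  \Flux(\lambda X) = \left[ \int_0^1 \iota(\lambda X_t)\,\mu\,dt \right] = \lambda\,\Flux(X),
\]
with the same computation (with $\omega$ in place of $\mu$) for the symplectic flux.

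Next I would prove additivity: if $X = \{X_t\}$ and $Y = \{Y_t\}$ both lie in one of the three classes, so does $X + Y$, again since $\iota$ and $\L$ are additive in the vector-field slot, and
\[
  \Flux(X + Y) = \left[ \int_0^1 \iota(X_t + Y_t)\,\mu\,dt \right] = \Flux(X) + \Flux(Y)
\]
because interior multiplication, integration over $t$, and passage to cohomology classes are all linear. Combining this with the scaling step shows that the divergence-free (resp.\ symplectic, resp.\ strictly contact) vector fields form an $\R$-vector space and that $\Flux$, the symplectic flux, and each of their restrictions is a linear map to $H^{k}(M,\R)$ with $k = \dim M - 1$ (resp.\ to $H^1(M,\R)$). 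For the last assertion I would invoke that the image of a linear map is a linear subspace, and that every linear subspace of the finite-dimensional space $H^{k}(M,\R)$ (finite-dimensional since $M$ is closed) is a closed subset; in view of Proposition~\ref{pro:poincare} this is precisely what guarantees that the $C^0$-continuous extension of the flux takes values in the same subspace, not merely in its closure.

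I do not expect a genuine obstacle here: the one point that deserves a line of care is the verification that each of the three structural conditions is stable under $\R$-linear combinations, and that follows at once from linearity of $\iota$ and $\L$ in the vector-field slot. (If one instead works with the flux of the time-one maps in the appropriate quotient of $H^{k}(M,\R)$, the same argument applies verbatim, a quotient of a vector space being a vector space.)
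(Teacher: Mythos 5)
Your proof is correct and is exactly the ``straightforward'' argument the paper has in mind (the paper omits the proof, merely noting afterwards that the closedness of the image also follows from the explicit formulas in equation~(\ref{eqn:omega}) and Proposition~\ref{pro:flux}). The key points --- linearity of $\iota$ and $\L$ in the vector-field slot, hence linearity of $\Flux$ on each class of vector fields, and the fact that a linear subspace of the finite-dimensional space $H^{k}(M,\R)$ is closed --- are all present and correctly justified.
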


A similar remark applies when passing to the groups of time-one maps.
For symplectic vector fields, the last part of the lemma also follows from equation~(\ref{eqn:omega}), and for strictly contact vector fields from Proposition~\ref{pro:flux}.
By composing with the inverse of the (vector space) isomorphism that assigns to a strictly contact vector field $X$ the basic function $\alpha (X)$, the flux homomorphism can also be viewed as a (vector space) homomorphism $C_b^\infty (M) \to H^k (M,\R)$.
Under the above identification of $H_1 (M,\R)$ with $\Hom([M,S^1],\R)$, we have the following explicit formula for the mass flow of a strictly contact isotopy.

\begin{pro}
The mass flow of a strictly contact isotopy generated by a basic contact Hamiltonian $H \colon [0,1] \times M \to \R$, and evaluated on a function $f \colon M \to S^1$, is given by the formula
	\[ (n + 1) \int_M \left( \int_0^1 H_t \, dt \cdot (R_\alpha . f) \right) \mu, \]
where $\mu = \alpha \wedge (d\alpha)^n$ is the canonical volume form on $M^{2 n + 1}$ induced by $\alpha$.
\end{pro}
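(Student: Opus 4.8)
The plan is to feed the explicit description of the flux of a strictly contact isotopy (Proposition~\ref{pro:flux}) into the Poincar\'e duality between flux and mass flow (Proposition~\ref{pro:poincare}), and then to rewrite the resulting top-degree form on $M$ in terms of the Reeb vector field.

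Set $G = \int_0^1 H_t \, dt$; this is again a basic function, since each $H_t$ is. By Proposition~\ref{pro:flux}, the closed $2n$-form $G \, (d\alpha)^n$ represents the class $\frac{1}{n+1} \Flux (X_H)$. Since the pairing in Proposition~\ref{pro:poincare} depends only on cohomology classes (note $f^* \sigma$ is closed), we may substitute this representative to obtain, for any $f \colon M \to S^1$,
	\[ \Theta (X_H)(f) = \int_M \Flux (X_H) \wedge f^* \sigma = (n + 1) \int_M G \, (d\alpha)^n \wedge f^* \sigma. \]

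It then remains to establish the pointwise identity
	\[ (d\alpha)^n \wedge f^* \sigma = (R_\alpha . f) \, \alpha \wedge (d\alpha)^n \]
of $(2n+1)$-forms on $M$, where $R_\alpha . f := (f^* \sigma)(R_\alpha)$ is the (globally well-defined, real-valued) derivative of $f$ along the Reeb flow. Both sides are top-degree forms, so it suffices to check that their difference is annihilated by the nowhere vanishing vector field $R_\alpha$: a $(2n+1)$-form $\eta$ on $M^{2n+1}$ with $\iota (R_\alpha) \eta = 0$ vanishes identically, because at each point $R_\alpha$ extends to a frame. Contracting with $R_\alpha$ and using $\iota (R_\alpha) \alpha = 1$ and $\iota (R_\alpha) d\alpha = 0$ (hence $\iota (R_\alpha) (d\alpha)^n = 0$), both sides contract to $(R_\alpha . f) (d\alpha)^n$, which proves the identity. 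Multiplying by $(n+1) G$ and integrating over $M$ with $\mu = \alpha \wedge (d\alpha)^n$, the displayed expression for $\Theta (X_H)(f)$ becomes $(n+1) \int_M \left( \int_0^1 H_t \, dt \cdot (R_\alpha . f) \right) \mu$, as claimed.

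The argument is short, and the only point that needs a little attention is that $R_\alpha . f$ makes sense globally even when $f$ admits no real-valued lift, so that the pointwise identity and the final formula are meaningful on all of $M$; basicness of $H$ is used only to ensure that $X_H$ is strictly contact, so that Proposition~\ref{pro:flux} applies. I do not expect any genuine obstacle.
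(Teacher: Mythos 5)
Your proof is correct and follows exactly the route the paper takes, which simply says to combine Proposition~\ref{pro:flux} with Proposition~\ref{pro:poincare}; you have in addition supplied the details the paper leaves implicit, namely the pointwise identity $(d\alpha)^n \wedge f^* \sigma = (R_\alpha . f)\, \alpha \wedge (d\alpha)^n$ verified by contracting with $R_\alpha$, and the observation that $R_\alpha . f = (f^*\sigma)(R_\alpha)$ is globally defined. No gaps.
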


\begin{proof}
Combine Proposition~\ref{pro:flux} and Proposition~\ref{pro:poincare}.
\end{proof}

\section*{Acknowledgments}
The first part of Example~\ref{exa:3-torus} (the image of the flux has rank at least two) was worked out jointly with A.~Banyaga and Spaeth during a visit of Banyaga to Korea Institute for Advanced Study, and was the starting point of this note.
I would like to thank Banyaga and Spaeth for stimulating discussions, Y.~Eliashberg for suggesting to me the construction in Example~\ref{exa:nontrivial} during a visit at MSRI, and KIAS for inviting Banyaga to Seoul and financial support for my visit to MSRI.

\bibliography{flux}
\bibliographystyle{amsalpha}
\end{document}